\renewcommand*{\backref}[1]{}
\renewcommand*{\backrefalt}[4]{%
    \ifcase #1 (Not cited.)%
    \or        (Cited on page~#2.)%
    \else      (Cited on pages~#2.)%
    \fi}
\newcommand{\reg}{\operatorname{reg}}
\newcommand{\sing}{\operatorname{sing}}
\newcommand{\K}{K\"ahler}
\numberwithin{equation}{section}
\def\eqref#1{(\ref{#1})}
\def\1{\sqrt{-1}\:}
\newcommand{\cntrct}                
{\hspace{2pt}\raisebox{1pt}{\text{$\lrcorner$}}\hspace{2pt}}
\newcommand{\Aut}{\operatorname{Aut}}
\newcommand{\diag}{\operatorname{diag}}
\renewcommand{\Re}{\operatorname{Re}}
\newcounter{Mycounter}[section]
\newcounter{lemma}[section]
\newcounter{claim}[section]
\newcounter{sublemma}[section]
\newcounter{corollary}[section]
\newcounter{theorem}[section]
\newcounter{conjecture}[section]
\newcounter{proposition}[section]
\newcounter{definition}[section]
\newcounter{example}[section]
\newcounter{remark}[section]
\newcounter{problem}[section]
\newcounter{question}[section]
\tikzset{join/.code=\tikzset{after node path={%
			\ifx\tikzchainprevious\pgfutil@empty\else(\tikzchainprevious)%
			edge[every join]#1(\tikzchaincurrent)\fi}}}
\tikzset{>=stealth',every on chain/.append style={join},
	every join/.style={->}}
\begin{document}

\title{LCK metrics on complex spaces with quotient singularities}

\author{George-Ionu\c t Ioni\c t\u a}

\address{%
\textsc{Department of Mathematics and Computer Science\\ 
"Politehnica" University Bucharest}\\ 
313 Splaiul Independen\c tei, Bucharest 060042, Romania}

\email{georgeionutionita@gmail.com}

\author{Ovidiu Preda}

\address{%
\textsc{Institute of Mathematics of the Romanian Academy}\\
P.O. Box 1-764, Bucharest 014700, Romania}

\email{ovidiu.preda@imar.ro}

\thanks{Ovidiu Preda was
supported by a grant of Ministry of Research and Innovation, CNCS - UEFISCDI, project number
PN-III-P1-1.1-PD-2016-0182, within PNCDI III}

\subjclass[2010]{32C15; 53C55}

\begin{abstract}
In this article we introduce a generalization of locally conformally \K\ metrics from complex manifolds to complex analytic spaces with singularities and study which properties of locally conformally \K\ manifolds still hold in this new setting. We prove that if a complex analytic space has only quotient singularities, then it admits a locally conformally \K\   metric if and only if its universal cover admits a \K\ metric such that the deck automorphisms act by homotheties of the \K\ metric. We also prove that the blow-up at a point of an LCK complex space is also LCK.
\end{abstract}

\maketitle
\section{Introduction}

A \K\ manifold is a complex manifold admitting a (1,1)-form $\omega$ which is positive definite and $\mathrm{d}$-closed. This form is called \K\ form, or by an abuse of language which is unlikely to cause any confusion, \K\ metric, since it corresponds to a hermitian metric.
By Dolbeault's lemma, a \K\ form can be written locally $\omega=i\partial\overline{\partial}\varphi$, where $\varphi$ is a strictly plurisubharmonic function, called \K\ potential. Hence, $\omega$ is completely determined by a family of \K\ potentials $(\varphi_\alpha)_{\alpha\in A}$, which verify the compatibility condition $i\partial\overline{\partial}\varphi_\alpha=i\partial\overline{\partial}\varphi_\beta$ on the open subset where both are defined. Grauert \cite{GRA} and Moishezon \cite{MOI} used this equivalent definition to extend the notion of \K\ metrics to complex spaces with singularities. 

A locally conformally \K\ manifold $M$ is a complex manifold admitting a (1,1)-form $\omega$ such that every point $x\in M$ has a neighborhood $U$ and there is a smooth function $f:U\rightarrow \mathbb{R}$ such that $e^{-f}\omega$ is \K .
By using local \K\ potentials and compatibility conditions, the definition of locally conformally \K\ metrics can be extended to complex spaces with singularities in the same manner as Grauert and Moishezon did for the \K\ case.

A well known characterization of locally conformally \K\ manifolds (LCK for short) is the following result: a complex manifold $M$ admits an LCK metric if and only if its universal cover $\widetilde{M}$ admits a \K\ metric such that the deck automorphisms act on $\widetilde{M}$ by \K\ homotheties. 

Therefore, it is natural to ask wether or not the equivalent definition for LCK remains valid for complex analytic spaces. The main result of this article solves this affirmatively in the case of complex analytic spaces with quotient singularities:

\begin{theorem}\label{TH1}
Let $X$ be a complex analytic space which has only quotient singularities. 

Then, $X$ admits an LCK metric if and only if its universal cover $\widetilde{X}$ admits a \K\ metric such that deck automorphisms act on $\widetilde{X}$ by homotheties of the \K\ metric.
\end{theorem}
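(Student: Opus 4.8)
The plan is to transcribe the classical manifold argument into the language of local \K\ potentials, using throughout that a quotient singularity $\C^n/G$ is contractible (the linear radial homotopy $H(z,t)=tz$ is $G$-equivariant, hence descends), so in particular simply connected. I would begin with a preliminary step that is really the heart of the matter: showing that the universal cover $\pi\colon\widetilde X\to X$ is again a complex analytic space with only quotient singularities, on which $\Gamma:=\pi_1(X)$ acts freely and properly discontinuously with $X=\widetilde X/\Gamma$, and that over a (simply connected) neighborhood of every singular point the covering is trivial, i.e.\ a local isomorphism. The complex structure on $\widetilde X$ is the one pulled back along the local homeomorphism $\pi$; local contractibility of $X$, which holds for quotient singularities, guarantees that $\pi_1$ and the universal cover are well behaved, while triviality of the cover near the singular locus ensures that strictly plurisubharmonic local potentials pull back to, and push forward from, strictly plurisubharmonic local potentials. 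I would also record the bookkeeping translation of both metric structures into potential data: a \K\ metric is a cover $\{U_\alpha\}$ with strictly plurisubharmonic $\varphi_\alpha$ and $\varphi_\alpha-\varphi_\beta$ pluriharmonic on overlaps, whereas an LCK metric additionally carries an additive locally constant cocycle $c_{\alpha\beta}$ with $\varphi_\alpha-e^{c_{\alpha\beta}}\varphi_\beta$ pluriharmonic; this cocycle defines the Lee class in $H^1(X;\R)$ and a homomorphism $\chi\colon\Gamma\to\R_{>0}$.

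For the direct implication, assume $X$ carries an LCK metric with potential data $(U_\alpha,\varphi_\alpha,c_{\alpha\beta})$, the $U_\alpha$ chosen connected, simply connected and evenly covered. I would pull everything back to $\widetilde X$ and use that $\widetilde X$ is simply connected, so $H^1(\widetilde X;\R)=\Hom(\pi_1(\widetilde X),\R)=0$ and the pulled-back Lee cocycle is a coboundary $c_{\alpha\beta}=a_\alpha-a_\beta$ for locally constant $a_\alpha$. Rescaling $\widetilde\varphi_\alpha:=e^{-a_\alpha}\pi^*\varphi_\alpha$ makes $\widetilde\varphi_\alpha-\widetilde\varphi_\beta$ pluriharmonic, so the $\widetilde\varphi_\alpha$ glue to a \K\ metric $\widetilde\omega$ on $\widetilde X$. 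Because the original cocycle descends from $X$, it is $\Gamma$-invariant upstairs; chasing the constants $a_\alpha$ through the $\Gamma$-action then shows that for each $\gamma$ the function $\gamma^*\widetilde\varphi-\widetilde\varphi$ is a global constant independent of the chart, so $\gamma^*\widetilde\omega=\chi(\gamma)\,\widetilde\omega$ and the deck group acts by homotheties.

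For the converse, assume $\widetilde X$ carries a \K\ metric $\widetilde\omega$ with $\gamma^*\widetilde\omega=\chi(\gamma)\,\widetilde\omega$ for a homomorphism $\chi\colon\Gamma\to\R_{>0}$. The device is to produce a $\Gamma$-equivariant conformal factor. Choosing $u_0\ge 0$ on $\widetilde X$ with $\sum_{\gamma\in\Gamma}u_0\circ\gamma\equiv 1$ (possible from a partition of unity on $X$ together with proper discontinuity), I set
\[
\phi:=\sum_{\gamma\in\Gamma}\chi(\gamma)\,(u_0\circ\gamma),
\]
so that $\phi>0$ and $\phi\circ\gamma_0=\chi(\gamma_0)^{-1}\phi$ for all $\gamma_0\in\Gamma$. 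Then $\phi\,\widetilde\omega$ is $\Gamma$-invariant and descends to a positive $(1,1)$-form on $X$; equivalently, pushing the local \K\ potentials of $\widetilde\omega$ down through $\pi$ with the character $\chi$ as gluing cocycle produces LCK potential data on $X$, with Lee form the descent of $d\log\phi$. Verifying that this descended datum is strictly plurisubharmonic and satisfies the LCK compatibility is immediate from the corresponding property of $\widetilde\omega$, since on an evenly covered chart $e^{-\log\phi}(\phi\,\widetilde\omega)=\widetilde\omega$ is \K .

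I expect the main obstacle to be the preliminary step rather than the two implications: one must justify carefully that the topological universal cover of a complex space with quotient singularities is itself such a space, that the deck action is compatible with the analytic structure, and --- most importantly --- that the covering is a local isomorphism over the singular locus, which is exactly what makes the potential-theoretic definitions transfer verbatim between $X$ and $\widetilde X$. Once this is in place, together with the vanishing $H^1(\widetilde X;\R)=0$, the rest is a faithful translation of the smooth LCK characterization into potentials; the only other point requiring care, the existence of the equivariant factor $\phi$, is settled by the averaging formula above.
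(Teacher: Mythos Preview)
Your argument is correct and takes a genuinely different route from the paper. The paper never works directly on $\widetilde X$; instead it passes to a resolution of singularities $\pi\colon Y\to X$, lifts to obtain a commutative square with $\widetilde Y\to\widetilde X$, and then invokes Koll\'ar's theorem that for quotient singularities the resolution induces an isomorphism on $\pi_1$, so that $\widetilde Y$ is a simply connected \emph{manifold}. There the Lee form is an honest closed $1$-form, hence exact, and one obtains a weakly \K\ metric on $\widetilde Y$ which is then pushed down to $\widetilde X$. Your approach bypasses both the resolution and Koll\'ar's theorem: you observe that $\C^n/G$ is contractible (after Cartan linearization the radial homotopy is $G$-equivariant), so $X$ is locally contractible, the universal cover is a local biholomorphism even across $X_{\sing}$, and the Lee data may be encoded as a \v Cech $1$-cocycle in the constant sheaf $\underline{\R}$ on $\widetilde X$ rather than as a $1$-form on the regular locus. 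Since $\widetilde X$ is simply connected and locally contractible, $\check H^1(\widetilde X;\underline{\R})=H^1_{\mathrm{sing}}(\widetilde X;\R)=0$ and the cocycle trivializes. This is more elementary and sidesteps precisely the obstruction the authors flag in their Remarks section (that $\widetilde X_{\reg}$ need not be simply connected), because you never need it to be. For the converse implication your averaging construction of $\phi$ and the paper's trivial-line-bundle section are essentially the same device; your version is just more explicit. The one point you should not undersell is the passage from the paper's definition of an LCK metric (smooth $h_\alpha$ with compatibility only on $X_{\reg}$) to your locally constant cocycle $c_{\alpha\beta}$: this uses that $dh_\alpha=dh_\beta$ on the dense connected set $(U_\alpha\cap U_\beta)\cap X_{\reg}$ (normality), together with continuity of $h_\alpha-h_\beta$ on all of $U_\alpha\cap U_\beta$; it is routine but worth stating.
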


\hspace{0.4cm}

We also generalize to complex spaces a classical theorem regarding LCK manifolds. We prove the following result:

\begin{theorem}\label{TH2}
The blow-up at a point of an LCK complex space is also LCK.
\end{theorem}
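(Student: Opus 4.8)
The plan is to exploit the fact that the blow-up $\pi\colon\widehat{X}\to X$ at a point $p$ is a biholomorphism away from the exceptional set $E=\pi^{-1}(p)$, so that the only real work is to repair the (necessarily degenerate) pulled-back metric in a neighborhood of $E$. Since $E$ is compact and contained in $\pi^{-1}(U_0)$ for a single chart $U_0\ni p$ of the LCK atlas, on $U_0$ the LCK metric is conformal to a genuine \K\ metric: writing $\omega$ for the LCK metric, we have $\omega|_{U_0}=e^{f_0}\,\omega_0$ with $\omega_0=i\partial\overline{\partial}\varphi_0$ \K\ and $\varphi_0$ the local \K\ potential. I would therefore reduce the problem to the classical statement that the blow-up of a \K\ space at a point is \K , applied to $(U_0,\omega_0)$, and then propagate the answer through the conformal factor $e^{f_0}$.

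First I would recall the local \K\ blow-up. Embedding $U_0$ into an open set of $\C^N$ and blowing up the ambient space at the image of $p$, one has on the blow-up the standard smooth Fubini--Study-type potential $\chi$ whose form $i\partial\overline{\partial}\chi$ is strictly positive in the fibre directions of $E\to\{p\}$ along which $\pi^*\omega_0$ degenerates. Multiplying $\chi$ by a cut-off supported in a small neighborhood $\pi^{-1}(V)$ of $E$ with $\overline V\subset U_0$, and restricting to the strict transform, gives a smooth function on $\widehat{U_0}$, still denoted $\chi$, with $\chi\equiv 0$ outside $\pi^{-1}(V)$. For $\varepsilon>0$ small enough the form
\[
\widehat{\omega}_0:=\pi^*\omega_0+\varepsilon\, i\partial\overline{\partial}\chi
\]
is a \K\ metric on $\widehat{U_0}$: away from $E$ the form $\pi^*\omega_0$ is already positive definite (there $\pi$ is biholomorphic) and dominates the bounded correction for small $\varepsilon$, while along $E$ positivity in the degenerate directions is supplied by $i\partial\overline{\partial}\chi$. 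Its \K\ potential is $\widehat{\varphi}_0:=\pi^*\varphi_0+\varepsilon\chi$.

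Next I would assemble the global metric on $\widehat{X}$ by setting
\[
\widehat{\omega}:=\pi^*\omega+\varepsilon\, e^{\pi^* f_0}\, i\partial\overline{\partial}\chi ,
\]
the correction being extended by zero outside $\pi^{-1}(V)$. On $\pi^{-1}(U_0)$ this equals $e^{\pi^* f_0}\bigl(\pi^*\omega_0+\varepsilon\, i\partial\overline{\partial}\chi\bigr)=e^{\pi^* f_0}\,\widehat{\omega}_0$, hence is conformal to the \K\ metric $\widehat{\omega}_0$ and therefore positive and locally conformally \K\ there; away from $E$ it equals $\pi^*\omega$, the pullback of an LCK metric by a biholomorphism, hence LCK, and positivity in the transition annulus holds for small $\varepsilon$ exactly as above. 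In terms of the Grauert--Moishezon potentials the construction shows that the conformal data are merely pulled back: keeping the constants $c_{\alpha\beta}$ of the atlas of $X$ and defining $\widehat{\varphi}_\alpha:=\pi^*\varphi_\alpha+\varepsilon\,e^{c_{\alpha 0}}\chi$ on the charts meeting $E$ and $\widehat{\varphi}_\alpha:=\pi^*\varphi_\alpha$ elsewhere, one checks on overlaps near $E$ that $i\partial\overline{\partial}\widehat{\varphi}_\alpha=e^{c_{\alpha 0}}\,i\partial\overline{\partial}\widehat{\varphi}_0$, so the compatibility constants are the old ones and the cocycle $(c_{\alpha\beta})$ is unchanged, while away from $E$ the potentials coincide with the pulled-back ones.

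The main obstacle I expect is purely in the singular local model, not in the conformal bookkeeping: one must guarantee the \K\ blow-up step on the possibly singular space $(U_0,\omega_0)$. This requires that $\chi$, defined via a local embedding and the blow-up of the ambient $\C^N$, restricts to a smooth strictly plurisubharmonic correction (in the sense of complex spaces) on the strict transform $\widehat{U_0}$, and that strict positivity of $\pi^*\omega_0+\varepsilon\, i\partial\overline{\partial}\chi$ survives both the restriction to the subvariety and the cut-off region; the first is standard for restrictions of plurisubharmonic functions, and the second follows because $\pi^*\omega_0$ is strictly positive off $E$, so that a uniformly small $\varepsilon$ works. Once this local \K\ statement for blow-ups of \K\ complex spaces is in hand, the conformal extension above immediately upgrades it to the LCK conclusion.
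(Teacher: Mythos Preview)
Your overall strategy---reduce to a single \K\ chart containing the point, repair the pulled-back metric near the exceptional divisor, then propagate through the conformal factor---is exactly the paper's, and your conformal bookkeeping is fine. The gap is in the local \K\ repair step. You claim there is a globally smooth function $\chi$ on $\widehat{U_0}$ (or on the ambient $\widehat{\mathbb B}$) with $i\partial\overline\partial\chi$ strictly positive in the directions tangent to $E$. No such function exists: restricting to the compact complex manifold $E\simeq\mathbb P^{N-1}(\C)$ gives $\iota^*(i\partial\overline\partial\chi)=i\partial\overline\partial(\chi|_E)$, an \emph{exact} real $(1,1)$-form on a compact manifold, and such a form cannot be everywhere positive (apply the maximum principle to $\chi|_E$). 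Hence $\widehat\omega_0=\pi^*\omega_0+\varepsilon\,i\partial\overline\partial\chi$ can never be positive definite along all of $E$, and the single global potential $\widehat\varphi_0=\pi^*\varphi_0+\varepsilon\chi$ you propose cannot do the job. Cutting off does not help: the obstruction lives on $E$ itself, where the cut-off is identically $1$.

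The paper circumvents this by taking the correction to be $-\varepsilon\,\Omega_E$, where $\Omega_E$ is the Chern curvature form of a Hermitian line bundle on $\widehat{\mathbb B}$ (essentially $\mathcal O(E)$ with a metric chosen flat outside a compact subset of $\pi^{-1}(U)$). This form is $d$-closed but only \emph{locally} $i\partial\overline\partial$-exact; its restriction to $E$ lies in the Fubini--Study class, which is what supplies positivity along $E$. One then invokes Dolbeault's lemma to write the resulting \K\ form on $\widehat{\mathbb B}$ as $(W_j,\varphi_j)_{j\in J}$, obtaining \emph{several} new potentials $\varphi_{\alpha j}$ in place of the single $\varphi_\alpha$. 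These satisfy the \K\ compatibility among themselves, and agree with $\pi^*\varphi_\alpha$ (up to pluriharmonic correction) outside $\pi^{-1}(U)$ because $\Omega_E$ vanishes there, so they glue to the old LCK atlas on $X\setminus U$. Your argument becomes correct once you replace the nonexistent global $\chi$ by this closed-but-not-exact curvature form and accept that the blown-up chart contributes a whole family of local potentials rather than one.
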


\hspace{0.4cm}

In section 4 of this paper, we give some examples of LCK singular complex spaces which do not admit \K\ metrics. They are obtained as a quotient of an LCK manifold by a finite group of automorphisms with fixed points. 
In the last section, we make some remarks and propose an open problem, for further study of LCK complex spaces.

\section{Preliminaries}

In this section, we collect the notions, definitions, and results that we need for the main theorem. 

\begin{definition}
Let $X$ be a complex space. A \textit{K\" ahler metric} on $X$ is a collection $(U_\alpha, \varphi_\alpha)_{\alpha\in A}$, where $(U_\alpha)_{\alpha\in A}$ is an open covering of $X$, $\varphi_\alpha$ is a strongly plurisubharmonic function on $U_\alpha$, such that on each nonempty intersection $U_\alpha\cap U_\beta$ we have the following compatibility condition: 
$$\varphi_\alpha-\varphi_\beta=\Re g_{\alpha\beta},$$ 
where $g_{\alpha\beta}$ is a holomorphic function on $U_\alpha\cap U_\beta$. 
\end{definition}

If $X$ is a complex manifold, such a collection $(U_\alpha, \varphi_\alpha)_{\alpha\in A}$ defines indeed a \K\ form on $X$, given locally, on each set $U_\alpha$, by $i\partial\overline{\partial}\varphi_\alpha$.

If the collection $(U_\alpha, \varphi_\alpha)_{\alpha\in A}$ verifies the open covering and the compatibility conditions from the \K\ metric definition above, but each function $\varphi_\alpha$ is only assumed to be plurisubharmonic, and strictly plurisubharmonic on the complement of an analytic subset of positive codimension in $U_\alpha$, then $(U_\alpha, \varphi_\alpha)_{\alpha\in A}$ is called a \textit{weakly \K\ metric} on $X$.

\vspace{0,4cm}
For a short presentation of locally conformally \K\ (LCK) manifolds, one may read the survey \cite{OV} by Ornea and Verbitsky. There are more equivalent definitions of LCK manifolds:
\begin{definition}
A complex manifold $M$ is called LCK if it verifies one of the following equivalent conditions:
\begin{enumerate}
\item There exists a $(1,1)$-form $\omega$ on $M$ such that for every $x\in M$, there exists an open neighborhood $U$ of $x$ and a smooth function $f:U\rightarrow \mathbb{R}$ such that $e^{-f}\omega$ is a \K\ form on $U$;
\item $M$ has a Hermitian metric $\omega$ such that $\mathrm{d}\omega=\theta\wedge\omega$, where $\theta$ is a closed 1-form on $M$, called the \textit{Lee form};
\item The universal cover $\widetilde{M}$ of $M$ has a \K\ metric such that the deck transform group acts on $\widetilde{M}$ by \K\ homotheties;
\item $M$ admits an oriented, flat, real line bundle $(L,\nabla)$ and an $L$-valued (1,1)-form $\omega$ which is \K\ with respect to $d_\nabla$.
\end{enumerate}
\end{definition}

Using the first of these equivalent definitions, we can generalize LCK metrics to singular complex spaces, in the following way:
\begin{definition}
Let $X$ be a complex space. An \textit{LCK metric} on $X$ is a collection $(U_\alpha, \varphi_\alpha, h_\alpha)_{\alpha\in A}$, where $(U_\alpha)_{\alpha\in A}$ is an open covering of $X$, $\varphi_\alpha$ is a strongly plurisubharmonic function on $U_\alpha$, and $h_\alpha:U_\alpha\rightarrow \mathbb{R}$ is a smooth function, such that we have the following compatibility condition: 
$$e^{h_\alpha}i\partial\overline{\partial}\varphi_\alpha=e^{h_\beta}i\partial\overline{\partial}\varphi_\beta$$ 
on each nonempty intersection $U_\alpha\cap U_\beta\cap X_{\reg}$.

As in the \K\ case, such a collection $(U_\alpha, \varphi_\alpha, h_\alpha)_{\alpha\in A}$ will be called \textit{weakly LCK} if we require every function $\varphi_\alpha$ to be only plurisubharmonic, and strictly plurisubharmonic on the complement of an analytic subset of positive codimension in $U_\alpha$.
\end{definition}

\vspace{0.4cm}
Two \K\ (or LCK) metrics are considered to be equal if they determine the same (1,1)-form on the regular locus of the complex space.

If $(X,\omega)$ is a \K\ space, $\omega=(U_\alpha,\varphi_\alpha)_{\alpha\in A}$, and $h:X\rightarrow \mathbb{R}$ is a smooth function, we denote by $e^h\omega$ the metric $(U_\alpha, \varphi_\alpha, h_{|U_\alpha})_{\alpha\in A}$. We say that an automorphism $\gamma\in \Aut(X)$ acts by homotheties of the \K\ metric if $\gamma^\star\omega=e^C\omega$, where $C\in \mathbb{R}$.

\begin{definition}
If $X$ is a complex space, then a point $x\in X_{\sing}$ is called \textit{quotient singularity} if there exists a finite subgroup $G$ of automorphisms of $\mathbb{C}^n$ such that the germs $(X,x)$ and $(\mathbb{C}^n/G,0)$ are biholomorphic.
\end{definition}

\vspace{0.4cm}
The folowing result about the properties of quotient singularities is a particular case of H. Cartan's \cite[Th\' eor\` eme 1]{CAR}:

\begin{theorem}\label{TH_CAR}
Let $M$ be a complex manifold and $G$ a finite group of automorphisms of $M$.

Then, the quotient space $X:=M/G$ with the sheaf induced by the canonical projection $p:M\rightarrow M/G$ is a normal space.
\end{theorem}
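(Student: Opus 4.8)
The plan is to work locally on $X=M/G$ and to build its structure sheaf out of $G$-invariant holomorphic functions, setting $\calo_X(V)=\calo_M(p^{-1}(V))^G$ for $V\subseteq X$ open. Since $G$ is finite, $p$ is a finite, proper, open, surjective map whose fibres are exactly the $G$-orbits, so both the complex-space property and normality can be verified in a neighbourhood of a single orbit. First I would reduce to the case of a group fixing a point. Given $x\in M$ with stabilizer $G_x=\{g\in G: gx=x\}$, finiteness of $G$ lets me choose a $G_x$-invariant open $U\ni x$ with $gU\cap U=\emptyset$ for every $g\notin G_x$; then $p(U)$ is a neighbourhood of $p(x)$ and $p$ induces a biholomorphism $p(U)\cong U/G_x$, since the only identifications made among points of $U$ are those by $G_x$. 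Hence it suffices to treat a finite group fixing a point.

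Next I would linearize the action. By the classical averaging argument of Cartan and Bochner, a finite group fixing a point of a complex manifold is linearizable there: starting from a chart $\phi$ with $\phi(x)=0$ and setting $\rho(g)=d(\phi\circ g\circ\phi^{-1})_0\in\gl{n,\C}$, the map $\Psi=\tfrac{1}{|G_x|}\sum_{g}\rho(g)^{-1}(\phi\circ g\circ\phi^{-1})$ has differential the identity at $0$ and satisfies $\Psi\circ(\phi g\phi^{-1})=\rho(g)\circ\Psi$. This yields a $G_x$-equivariant biholomorphism from a neighbourhood of $x$ onto a neighbourhood of $0$ in $\C^n$ on which $G_x$ acts through the linear representation $\rho$. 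Thus the germ $(X,p(x))$ is biholomorphic to $(\C^n/G,0)$ for a finite $G\subseteq\gl{n,\C}$, and we are reduced to the linear case.

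To exhibit $\C^n/G$ as a complex space, I would invoke E. Noether's theorem that the invariant ring $\C[z_1,\dots,z_n]^G$ is finitely generated, say by $f_1,\dots,f_N$. These assemble into a map $F=(f_1,\dots,f_N):\C^n\to\C^N$ that is constant on orbits and separates distinct orbits, so it factors through a homeomorphism of $\C^n/G$ onto the image $Y=F(\C^n)$, an analytic subset of $\C^N$. The step demanding real care, and the main obstacle, is to identify the structure sheaf: one must show that every germ of a $G$-invariant holomorphic function is a holomorphic function of $f_1,\dots,f_N$, so that $\calo_{X,p(x)}\cong(\calo_{\C^n,0})^G$ is genuinely the analytic local ring of $Y$ at $F(0)$. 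This is the substantive analytic ingredient and is where Cartan's argument does its work; it rests on the finiteness of $p$ together with a Weierstrass-type argument showing the invariant analytic algebra is integral over, and generated by, the image of the polynomial invariants.

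Finally, normality follows from commutative algebra once the local ring is identified. The local ring $\calo_{M,x}$ is regular, hence an integrally closed domain, and $\calo_{X,p(x)}=\calo_{M,x}^{G_x}$ is the ring of invariants of the finite group $G_x$. If $A$ is integrally closed with fraction field $K$ and a finite group $H$ acts on $A$, then $A^{H}=A\cap K^{H}$ is integrally closed in $K^{H}$: any $t\in K^{H}$ integral over $A^{H}$ is a fortiori integral over $A$, hence lies in $A$, and being $H$-invariant it lies in $A^{H}$. Applying this with $H=G_x$ shows each local ring of $X$ is normal, which is exactly the assertion that $X=M/G$ is a normal complex space.
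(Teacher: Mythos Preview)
The paper does not give its own proof of this statement; it simply quotes it as a particular case of Cartan's \textit{Th\'eor\`eme 1} and refers the reader to \cite{CAR}. So there is no in-paper argument to compare against.

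That said, your sketch is essentially the argument Cartan gives. The reduction to the isotropy group via a slice, the Bochner--Cartan linearization, the embedding of $\C^n/G$ via a generating set of polynomial invariants, and the deduction of normality from the fact that $\calo_{M,x}^{G_x}$ is the fixed ring of a finite group acting on a regular (hence normal) local ring --- all of this is correct and standard. You are right to flag the identification $\calo_{X,p(x)}\cong(\calo_{\C^n,0})^G\cong\calo_{Y,F(0)}$ as the genuine analytic content: this is precisely the point that occupies most of Cartan's note, and it is not automatic from Noether's algebraic statement alone. Once that is granted, the normality argument you give (integral closure passes to invariants under a finite group) is clean and complete.
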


\vspace{0.4cm}
The next theorem, by Bierstone and Milman \cite[Theorem 13.4]{BI-MI}, is the fundamental result on global desingularization of complex spaces.
\begin{theorem}
Any complex space $X$ admits a desingularization $\pi:\widetilde{X}\rightarrow X$ such that $\pi$ is the composition of a locally finite sequence of blow-ups with smooth centers and $\pi^{-1}(X_{\sing})$ is a divisor with normal crossings in $\widetilde{X}$.
\end{theorem}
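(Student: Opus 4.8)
This is Hironaka's resolution of singularities in the constructive, complex-analytic form of Bierstone and Milman, so the plan is not to invent a new argument but to reproduce the canonical desingularization algorithm. First I would reduce the global statement to a local one: a complex space is covered by charts realized as analytic subsets of open sets in $\mathbb{C}^N$, so it suffices to resolve each such germ and then patch. Patching is legitimate only because the construction is \emph{canonical}, that is, functorial for smooth morphisms and local isomorphisms; this functoriality forces the locally defined blow-up centers to agree on overlaps and glue to a globally defined smooth center. The normal-crossings condition on $\pi^{-1}(X_{\sing})$ would be secured by carrying the accumulated exceptional divisor along as extra bookkeeping data at every stage of the induction.

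The heart of the argument is to assign to each point of $X$ a local invariant measuring the complexity of the singularity there. Following Bierstone and Milman I would use the order of a suitable ideal together with Hilbert--Samuel data, assembled into a lexicographically ordered tuple $\mathrm{inv}(x)$ taking values in a well-ordered set. The structural facts to be proved are: (i) $\mathrm{inv}$ is upper semicontinuous, so the locus where it attains its maximum is a closed analytic subset; (ii) by the theory of hypersurfaces of maximal contact and coefficient ideals, this maximal locus is in fact smooth, hence an admissible center for blowing up; and (iii) after blowing up along it, the maximum value of $\mathrm{inv}$ strictly decreases in the chosen ordering.

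The main obstacle is step (iii) together with termination. Showing that the invariant strictly drops rests on a dimensional induction built on maximal contact: one descends to a smooth hypersurface containing the top stratum, restricts the marked ideal to it via coefficient ideals, applies the inductive resolution in one lower dimension, and checks that the transforms of both the ideal and the exceptional divisor are compatible with the blow-up. Since $\mathrm{inv}$ lands in a well-ordered set and strictly decreases on the top stratum at each step, there can be no infinite strictly decreasing chain, so the process stops after finitely many blow-ups locally; the complex-analytic framework then upgrades this to a \emph{locally finite} global sequence. As every center is smooth and the exceptional locus is maintained in normal-crossings position throughout, the composite $\pi:\widetilde{X}\to X$ has exactly the asserted properties.
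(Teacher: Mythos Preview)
Your outline is a faithful high-level sketch of the Bierstone--Milman canonical desingularization algorithm, and that is exactly the source the paper invokes: the theorem is quoted verbatim as \cite[Theorem~13.4]{BI-MI} and is not proved in the paper at all. So there is no ``paper's own proof'' to compare against; the authors simply cite the result as a known black box and use it in the proof of Theorem~\ref{TH1}.

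That said, a brief comment on what you wrote: the sketch is accurate in spirit (functoriality for local isomorphisms to glue, an upper-semicontinuous invariant, maximal contact and coefficient ideals, strict decrease in a well-ordered set), but be aware that each of those bullet points hides substantial technical work---in particular, the independence of the invariant from the choice of maximal-contact hypersurface and the precise bookkeeping of old versus new exceptional divisors are the delicate parts, and ``the complex-analytic framework then upgrades this to a locally finite global sequence'' is itself a nontrivial step relying on coherence and paracompactness. For the purposes of this paper, though, a citation is all that is needed, and your proposal correctly identifies the reference and its content.
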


In this theorem locally finite sequence of blow-ups means that on every compact subset, all but finitely many blow-ups are trivial.

\vspace{0.4cm}
The following theorem of Koll\' ar, which combines \cite[Lemma 7.2]{KOL} and \cite[Theorem 7.5]{KOL}, gives a sufficient condition under which the fundamental group of a normal space and the fundamental group of a desingularization of it, are isomorphic.

\begin{theorem}\label{TH_KOL}
Let $X$ be a normal space which has only quotient singularities and $f:Y\rightarrow X$ a resolution of singularities. 

Then, the induced homomorphism $f_\star:\pi_1(Y)\rightarrow \pi_1(X)$ is an isomorphism.
\end{theorem}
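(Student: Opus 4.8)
The plan is to prove surjectivity and injectivity of $f_\star$ separately, reducing both to a comparison of fundamental groups over the smooth locus and ultimately to a purely local assertion about the germs $(\C^n/G,0)$. Surjectivity is quick: $f$ is proper and birational and $X$ is normal, so by Zariski's main theorem $f$ has connected fibres, and a proper surjection with connected fibres is surjective on $\pi_1$. For injectivity, set $E=f^{-1}(X_{\sing})$ and use that $f$ restricts to a biholomorphism $Y\setminus E\xrightarrow{\ \sim\ }X_{\reg}$. Since $Y$ is smooth and $E$ has real codimension $2$, the inclusion gives a surjection $\pi_1(Y\setminus E)\twoheadrightarrow\pi_1(Y)$; since $X$ is normal with $X_{\sing}$ of complex codimension $\ge 2$, one also has $\pi_1(X_{\reg})\twoheadrightarrow\pi_1(X)$. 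Under $\pi_1(X_{\reg})\cong\pi_1(Y\setminus E)$ the homomorphism $f_\star$ is exactly the comparison of these two quotients, so it is an isomorphism iff their kernels coincide.

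The kernel of $\pi_1(Y\setminus E)\to\pi_1(Y)$ is normally generated by the meridians of the components of $E$, and the kernel of $\pi_1(X_{\reg})\to\pi_1(X)$ is normally generated by the images of the local fundamental groups at the singular points. Both kernels are therefore generated by data supported near $X_{\sing}$, so by van Kampen it suffices to match them on a single model neighbourhood $W\cong B/G$, with $B$ a ball. There $W$ is contractible, so the whole local group $\pi_1(W\cap X_{\reg})\cong G$ lies in the downstairs kernel, whereas $\pi_1(f^{-1}(W))=\pi_1(W\cap X_{\reg})/\langle\!\langle\text{meridians}\rangle\!\rangle$. Hence the kernels agree on this chart precisely when $\pi_1(f^{-1}(W))=1$; that is, the theorem reduces to the single local statement that \emph{the resolution of the germ $(\C^n/G,0)$ is simply connected}.

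To prove this I would pass to the $G$-cover. Using the functorial (hence $G$-equivariant) desingularization of Bierstone–Milman, choose a $G$-equivariant resolution $\bar Z\to B$ dominating the normalized fibre product of $f^{-1}(W)\to W$ with $B\to B/G$, so that $\bar Z/G$ resolves $B/G$. On one hand $\bar Z$ is simply connected: it resolves the smooth simply connected $B$, its exceptional locus maps into a set $S\subset B$ of complex codimension $\ge 2$, so $\pi_1(\bar Z)=\pi_1(\bar Z\setminus\text{exc})=\pi_1(B\setminus S)=1$. On the other hand Armstrong's theorem identifies $\pi_1(\bar Z/G)$ with $G/H$, where $H$ is the normal subgroup generated by the elements of $G$ having a fixed point on $\bar Z$. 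So everything comes down to showing that every $g\in G$ fixes a point of $\bar Z$, for then $H=G$ and $\pi_1(\bar Z/G)=1$.

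I expect this fixed-point statement to be the main obstacle. The fixed locus of $g$ on $B$ is a nonempty linear subspace, but it may lie entirely inside the branch locus $S$, so it cannot simply be lifted through the biholomorphic part of $\bar Z\to B$. One must instead study the action of $g$ on the connected compact exceptional fibres lying over $\mathrm{Fix}(g)$ and locate a fixed point there, for instance by a Lefschetz-type argument using that quotient singularities are rational, so that these fibres are rationally chain connected and the relevant Lefschetz numbers cannot vanish. Proving $H=G$ in this way is the technical core; granting it, Armstrong's theorem makes the germ resolution simply connected, and the local matching of kernels from the previous step then yields that $f_\star\colon\pi_1(Y)\to\pi_1(X)$ is an isomorphism.
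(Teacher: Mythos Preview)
The paper does not give its own proof of this statement: it is quoted as a black box from Koll\'ar \cite[Lemma~7.2, Theorem~7.5]{KOL}, with Verbitsky \cite{VER} mentioned as an alternative reference. So there is no ``paper's proof'' to compare against; you are attempting to reprove Koll\'ar's theorem from scratch, which goes well beyond what the paper does.

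On the merits of your sketch: the global reduction is sound. Surjectivity via connected fibres is correct, and the identification of $f_\star$ with the comparison of the two quotients of $\pi_1(X_{\reg})\cong\pi_1(Y\setminus E)$ is the right framework; the van Kampen localisation to a single germ $(\C^n/G,0)$ is standard and correctly reduces everything to showing that any resolution of $B/G$ is simply connected.

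The local argument, however, has a genuine gap. You assert that for your $G$-equivariant $\bar Z\to B$ the quotient $\bar Z/G$ ``resolves $B/G$''. It does not: $\bar Z/G$ is normal but in general has quotient singularities wherever $G$ acts with nontrivial stabiliser on $\bar Z$, so it is not a resolution. Consequently, even if Armstrong's theorem gives $\pi_1(\bar Z/G)=1$, you have not linked this to $\pi_1(f^{-1}(W))$. What you actually obtain is a proper birational morphism $\bar Z/G\to f^{-1}(W)$ from a normal space with quotient singularities to a smooth space, and concluding that this induces an isomorphism on $\pi_1$ is exactly an instance of the theorem you are proving. The circularity can be broken by induction on $|G|$ (the stabilisers on $\bar Z$ are proper subgroups, provided $G$ acts effectively), but this inductive structure is missing from your write-up and is not a triviality.

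Separately, the fixed-point step you flag as ``the main obstacle'' really is one, and the Lefschetz-via-rational-chain-connectedness idea you gesture at is neither stated precisely nor obviously applicable in the analytic local setting; you would need to justify carefully why every $g\in G$ fixes a point of $\bar Z$ (this is where Koll\'ar and Verbitsky do real work). As written, the proposal is a reasonable outline of a known strategy but is incomplete at both the fixed-point step and, more seriously, at the passage from $\pi_1(\bar Z/G)$ to $\pi_1$ of the given resolution.
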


Of course, taking into account \ref{TH_CAR}, the assumption of normality in Koll\' ar's theorem is superfluous, but we kept the original statement. We mention that a different proof for \ref{TH_KOL} was given by Verbitsky \cite[Theorem 4.1]{VER}.


\section{The main results}

\subsection{A characterization theorem for LCK complex spaces}

\begin{proof}[Proof of \ref{TH1}]
Firstly, we prove the direct implication, so we know by hypothesis that $X$ admits an LCK metric.
We denote by  $p:\widetilde{X}\rightarrow X$ the universal cover of $X$ and we consider $\pi:Y\rightarrow X$ a resolution of singularities for $X$. These two maps  induce a resolution of singularities $\widetilde{\pi}:\widetilde{Y}\rightarrow \widetilde{X}$ and a cover $\widetilde{p}:\widetilde{Y}\rightarrow Y$ such that the following diagram commutes:
$$\xymatrix{
\widetilde{Y} \ar[d]_-{\widetilde{\pi}} \ar[r]^{\widetilde{p}} &Y\ar[d]^-{\pi}\\
\widetilde{X} \ar[r]_p          &X}$$

Now, denote by $(U_{\alpha},\varphi_{\alpha},h_{\alpha})_{\alpha \in A}$ the LCK metric on $X$. Then, with the notations: $V_\alpha:=\pi^{-1}(U_{\alpha})$, $\psi_\alpha:=\varphi_{\alpha}\circ\pi$, and $g_\alpha:=h_{\alpha}\circ\pi$ for every ${\alpha \in A}$, we have that $(V_{\alpha},\psi_{\alpha},g_{\alpha})_{\alpha \in A}$ is a weakly LCK metric on $Y$. Since $p:\widetilde{Y}\rightarrow Y$ is a covering of $Y$, if we define $\widetilde{V}_\alpha:=\widetilde{p}^{-1}(V_\alpha)$, $\widetilde{\psi}_\alpha:=\psi_\alpha\circ\widetilde{p}$, and $\widetilde{g}_\alpha:=g_\alpha\circ\widetilde{p}$, we obtain that 
$(\widetilde{V}_\alpha, \widetilde{\psi}_\alpha, \widetilde{g}_\alpha)_{\alpha\in A}$ is a weakly LCK metric on $\widetilde{Y}$. 
Denote by $\widetilde{\theta}$ its induced Lee form. 
Since $\widetilde{X}$ is the universal cover of a complex space with only quotient singularities, it also has only quotient singularities, and by \ref{TH_CAR} it is also normal. Hence, by \ref{TH_KOL}, $\widetilde{Y}$ is simply connected, which further implies that $\widetilde{\theta}$ is exact: there exists $F\in\mathcal{C}^\infty(\widetilde{Y})$ such that $\widetilde{\theta}= \mathrm{d}F$.

Next, we may assume that the sets $(U_\alpha)_{\alpha\in A}$ of the LCK structure on $X$ are connected and sufficiently small such that for each $\alpha\in A$, $p^{-1}(U_\alpha)$ is a disjoint union of open sets in $\widetilde{X}$, each of them biholomorphic to $U_\alpha$. Then, for each $\alpha\in A$, $\widetilde{p}^{-1}(V_\alpha)=\cup_{i\in I_\alpha} \widetilde{V}_{\alpha, i}$ is a union of pairwise disjoint open connected sets, each of them biholomorphic to $V_\alpha$, and we denote $\widetilde{U}_{\alpha,i}:=\widetilde{\pi}(\widetilde{V}_{\alpha,i})$; for every $\alpha\in A$ and $i\in I_\alpha$, we have that $p(\widetilde{U}_{\alpha,i})=U_\alpha$. Also, for every $\alpha\in A$ and every $i\in I_\alpha$, 
$\widetilde{p}: \widetilde{V}_{\alpha, i} \rightarrow V_\alpha$ is a biholomorphism and on $\widetilde{V}_{\alpha,i}$, we have: $\mathrm{d}(\widetilde{g}_\alpha)=\mathrm{d}F$. Hence, there exists a constant $C_{\alpha,i}\in\mathbb{C}$ such that $F=\widetilde{g}_\alpha +C_{\alpha,i}$ on $\widetilde{V}_{\alpha,i}$. Now, it is not difficult to verify that 
$$\left(\widetilde{V}_{\alpha,i}, e^{-C_{\alpha,i}}\widetilde{\psi}_{{\alpha}_{|\widetilde{V}_{\alpha,i}}}\right)_{\alpha\in A; i\in I_\alpha}$$ is a weakly \K\ metric on $\widetilde{Y}$. 

Furthermore, since $\widetilde{\psi}_{{\alpha}_{|\widetilde{V}_{\alpha,i}}}$ are by construction constant on the fibers of $\widetilde{\pi}$, they descend to $\widetilde{X}$, where we denote them by $\widetilde{\varphi}_{{\alpha}_{|\widetilde{U}_{\alpha,i}}}$. Moreover, since $\widetilde{\varphi}_{{\alpha}_{|\widetilde{U}_{\alpha,i}}}=p^\star\varphi_{\alpha_{|\widetilde{U}_{\alpha,i}}}$ and $p$ is a local biholomorphism, they are strictly plurisubharmonic, hence the family 
$$\widetilde{\omega}=\left(\widetilde{U}_{\alpha,i}, e^{-C_{\alpha,i}}\widetilde{\varphi}_{{\alpha}_{|\widetilde{U}_{\alpha,i}}}\right)_{\alpha\in A; i\in I_\alpha}$$ is a \K\ metric on $\widetilde{X}$.

From this point forward, the proof is similar to the one for manifolds, with the necessary adaptations. 
Knowing that $\widetilde{g}_\alpha$ is constant on the fibers of $\widetilde{\pi}$, we deduce that $\widetilde{g}_\alpha$ descends to a function $\widetilde{h}_\alpha$ on $p^{-1}(U_\alpha)=\cup_{i\in I_{\alpha}}\widetilde{U}_{\alpha,i}$. Also, since $F=\widetilde{g}_\alpha +C_{\alpha,i}$ on $\widetilde{V}_{\alpha,i}$, it follows that $F$ descends to a function $f$ on $\widetilde{X}$.
Hence, we have $f=\widetilde{h}_\alpha +C_{\alpha,i}$ on $\widetilde{U}_{\alpha,i}$.

Consider $\gamma\in\Aut_X{\widetilde{X}}$.
By the commutativity of the diagram, we also have $\widetilde{h}_\alpha=h_\alpha\circ p$, hence it is invariant to the action of $\gamma$. That being so, taking into account that $\widetilde{h}_\alpha=f-C_{\alpha,i}$ on $\widetilde{U}_{\alpha,i}$, we get that the 1-form $\mathrm{d}f$ defined on $\widetilde{X}_{\reg}$ is invariant to the action of $\gamma$.
Since $\mathrm{d}(f-\gamma^\star f)=\mathrm{d}f-\mathrm{d}(\gamma^\star f)=\mathrm{d}f-\gamma^\star(\mathrm{d} f)=0$ on $\widetilde{X}_{\reg}$, there exists $C\in\mathbb{C}$ such that $f=\gamma^\star f+C$ on $\widetilde{X}_{\reg}$. By the continuity of $f$ and the connectedness and density of $\widetilde{X}_{\reg}$, we deduce that $f=\gamma^\star f+C$ on $\widetilde{X}$.
Next, we want to see how $\gamma$ acts on the \K\ metric. For each $j\in I_\alpha$, there is exactly one $i\in I_\alpha$ such that $\gamma(\widetilde{U}_{\alpha,j})=\widetilde{U}_{\alpha,i}$, thus:
$$\gamma^\star\left(e^{-C_{\alpha,i}}\widetilde{\varphi}_{{\alpha}_{|\widetilde{U}_{\alpha,i}}}\right)=
\gamma^\star\left(e^{\widetilde{h}_\alpha-f}\widetilde{\varphi}_{{\alpha}_{|\widetilde{U}_{\alpha,i}}}\right)=
e^{\widetilde{h}_\alpha-\gamma^\star f}\widetilde{\varphi}_{{\alpha}_{|\widetilde{U}_{\alpha,j}}}=
e^{C}e^{\widetilde{h}_\alpha-f}\widetilde{\varphi}_{{\alpha}_{|\widetilde{U}_{\alpha,j}}}$$
$$=e^C\left(e^{-C_{\alpha,j}}\widetilde{\varphi}_{{\alpha}_{|\widetilde{U}_{\alpha,j}}}\right),$$
which consequently gives $\gamma^\star\widetilde{\omega}=e^C\widetilde{\omega}$, ending the proof for the direct implication.

Now, in order to prove the reversed implication, we suppose that the universal cover $\widetilde{X}$ of the complex space $X$, has a \K\ metric $\widetilde{\omega}$ such that $\Aut_X\widetilde{X}$ acts on $\widetilde{X}$ by \K\ homotheties. Hence, we have the character morphism $\chi:\Aut_X\widetilde{X}=\pi_1(X)\rightarrow \mathbb{R}^{>0}$ given by $\Aut_X\widetilde{X}\ni \gamma\mapsto \frac{\gamma^\star\widetilde{\omega}}{\widetilde{\omega}}\in \mathbb{R}$. On $\widetilde{X}\times \mathbb{R}$ we consider the following equivalence relation: $(x,t)\sim(y,s)$ if there exists $\gamma\in\Aut_X\widetilde{X}$ such that $y=\gamma(x)$ and $s=\chi(\gamma)t$. Then, $E=((\widetilde{X}\times\mathbb{R})/{\sim})\rightarrow X$ is a line bundle which is trivial, since there exists an open cover of $X$ and a choice of transition maps which are all positive. 
Given a section $u$ on $E$ which is non-zero at every point, we obtain a section $\widetilde{u}=p^\star u$ for the line bundle $\widetilde{E}=p^\star E$ on $\widetilde{X}$, which is also trivial. Hence, we may consider that $\widetilde{u}$ is a function with values in $\mathbb{R}^{>0}$.
For any $\gamma \in \Aut_X\widetilde{X}$, by the construction of the line bundle $\widetilde{E}$, we get 
$$\frac{\gamma^\star\widetilde{u}}{\widetilde{u}}=\chi(\gamma)=\frac{\gamma^\star\widetilde{\omega}}{\widetilde{\omega}},$$
hence $\displaystyle\frac{1}{\widetilde{u}}\widetilde{\omega}$ is deck-invariant.
There exists a real function $\widetilde{h}$ such that $\displaystyle\frac{1}{\widetilde{u}}=e^{\widetilde{h}}$. Also, we may consider that $\widetilde{\omega}=(\widetilde{U}_{\alpha,i}, \widetilde{\varphi}_{\alpha,i})$, where for every $\alpha$, the family $(\widetilde{U}_{\alpha,i})_{i\in I_\alpha}$ is made of connected open sets which are projected by $p$ biholomorphically on $U_\alpha\subset X$. 
Then, for every $\alpha$, we choose an arbitrary $i\in I_\alpha$ and denote $\varphi_\alpha=\widetilde{\varphi}_{\alpha,i}\circ p_{|\widetilde{U}_{\alpha,i}}^{-1}:U_\alpha\rightarrow \mathbb{R}$, and $h_\alpha=\widetilde{h}\circ p_{|\widetilde{U}_{\alpha,i}}^{-1}:U_\alpha\rightarrow \mathbb{R}$. With these notations, it is now easy to verify that $(U_\alpha, \varphi_\alpha, h_\alpha)_{\alpha\in A}$ is an LCK metric on $X$.
\end{proof}

\subsection{The blow-up at a point of an LCK complex space}

A classical result, by Tricerri \cite{TRI} and Vuletescu \cite{VUL}, says that the blow-up at a point of an LCK manifold is also an LCK manifold. In the next lines we show that this result can be easily generalized to complex spaces.

\begin{proof}[Proof of \ref{TH2}]
Let $X$ be a complex space with the LCK metric $\omega=(V_\alpha, \varphi_\alpha, h_\alpha)_{\alpha\in A}$, and $x_0\in X$. We may assume that $x_0$ has a neighborhood $U$ such that $U\subset V_\alpha$, and $U\cap V_\beta=\emptyset$ for all $\beta\not =\alpha$. We may also assume that $V_\alpha$ is sufficiently small such that it can be embedded as a closed complex subspace in the unit ball $\mathbb{B}\subset\mathbb{C}^N$, and such that $\varphi_\alpha$ extends to a strictly plurisubharmonic function on $\mathbb{B}$ (we keep the same notation $\varphi_\alpha$ for the extended function). 

Now, denote by $\widehat{\mathbb{B}}$ the blow-up of $\mathbb{B}$ in $x_0$, and by $\pi:\widehat{\mathbb{B}}\rightarrow \mathbb{B}$ the projection. Then, $\pi^{-1}(x_0)=:E\simeq \mathbb{P}^{N-1}(\mathbb{C})$. 
Using the technique from \cite{VUL}, one can construct a (1,1)-form $\Omega_E$ as the curvature of a line bundle on $\widehat{\mathbb{B}}$, such that $\Omega_E$ has the following properties:
it is negative definite along $E$ (i.e. $\Omega_E(v,J_{\widehat{\mathbb{B}}}v)<0$ for every $P\in E$ and every non-zero vector $v\in T_P(E)$, where $J_{\widehat{\mathbb{B}}}$ is the complex structure on $\widehat{\mathbb{B}}$), it is  negative semi-definite at points of $E$ (i.e. $\Omega_E(v,J_{\widehat{\mathbb{B}}}v)\leq 0$ for every $P\in E$ and every $v\in T_P(\widehat{\mathbb{B}})$), and it is zero outside a compact subset of $\pi^{-1}(U)$.

Then, for a sufficiently small constant $\varepsilon>0$, the (1,1)-form 
$$h=i\partial\overline{\partial}(\pi^\star\varphi_\alpha) -\varepsilon\Omega_E$$ 
is positive definite. It is also $\mathrm{d}$-closed, since $\Omega_E$ is the curvature form of a line bundle. Hence, it is a \K\ form on $\widehat{\mathbb{B}}$.  By Dolbeault's lemma, it can be represented in the form of the generalized definition of \K\ metrics, as $h=(W_{j}, \varphi_{j})_{j\in J}$. With the notations $V_{\alpha j}=\pi^{-1}(V_\alpha)\cap W_j$ and $\varphi_{\alpha j}={\varphi_j}_{|V_{\alpha j}}$, we have that $(V_{\alpha j},\varphi_{\alpha j})_{j\in J}$ is a \K\ metric on $\widehat{V_\alpha}$, the blow-up of $V_\alpha$ at $x_0$.
Since $\Omega_E=0$ outside $\pi^{-1}(U)$, the strictly plurisubharmonic functions $\pi^\star \varphi_\alpha$ and $\varphi_{\alpha j}$ determine the same \K\ metric on $V_{\alpha j}\setminus \pi^{-1}(U)$.
For this reason, by glueing $X\setminus U$ and $\widehat{V_\alpha}$ in the natural way,
with the notation $h_{\alpha j}={(\pi^\star h_\alpha)}_{|V_{\alpha j}}$, the compatibility condition $e^{h_\beta}i\partial\overline{\partial}\varphi_\beta=e^{h_{\alpha j}}i\partial\overline{\partial}\varphi_{\alpha j}$ holds on $V_\beta\cap V_{\alpha j}\cap X_{\reg}$, for any $j\in J$ and any $\beta\in A\setminus\{\alpha\}$, because $V_\beta\cap\pi^{-1}(U)=\emptyset$.
Hence,
$$(V_\beta, \varphi_\beta, h_\beta)_{\beta\in A\setminus\{\alpha\}}\cup (V_{\alpha j},\varphi_{\alpha j},h_{\alpha j})_{j\in J}$$ 
is an LCK metric on $\widehat{X}$, the blow-up at $x_0$ of $X$.

\end{proof}

\section{Examples}

In this section we give examples of LCK complex spaces which do not admit \K\ metrics. They are obtained as the  quotient of an LCK (non-\K) manifold by a finite group of automorphisms which have fixed points.

\begin{example}\label{EX1}
Quotients of Hopf manifolds of dimension at least 3.

Consider $\lambda\in \mathbb{C}$, $\lambda\not=0$, $|\lambda|<1$ and the matrix $A=\diag(\lambda,\lambda,\lambda^2)$. Denote $G=\{A^n : n\in\mathbb{Z}\}$. Then, the Hopf manifold $H=(\mathbb{C}^3\setminus\{0\})/G$ is an LCK manifold which does not admit \K\ metrics.  
Also, take the matrix $B=\diag(-1,-1,1)$ and denote $J=\{I_3, B\}$. 
Define the function $\Phi:\mathbb{C}^3\rightarrow\mathbb{C}^4$ by $\Phi(z_1,z_2,z_3)=(z_1^2,z_2^2,z_1z_2,z_3)$.
By the results in \cite[Section 4]{CAR}, $Y=(\mathbb{C}^3\setminus\{0\})/J$ is a singular complex space biholomorphic to $Y_0=\Phi(\mathbb{C}^3\setminus\{0\})\subset\mathbb{C}^4\setminus\{0\}$. 
Consider the function $\varphi(w)=\|w\|^2$ on $\mathbb{C}^4$. Then, $i\partial\overline{\partial}\varphi$ is a \K\ form on $\mathbb{C}^4$ which induces a \K\ metric $\Omega_0$ on $Y_0$. Denote by $\Omega$ the \K\ metric on $Y$ obtained via the biholomorphism $Y \simeq Y_0$. Also, denote $Y/G=:X$.
It is not difficult to verify that  $\Aut_X Y\simeq G$ acts by homotheties of the \K\ metric $\Omega$. 
Finally, by theorem \ref{TH1}, which for the converse implication is true (with the same proof) for any cover, not only the universal cover,
$X$ is LCK. However, $X$ does not admit \K\ metrics, since it contains $(\mathbb{C}^2\setminus\{0\}\times\{0\})/JG$ as a closed complex subspace which is a 2-dimensional Hopf mainfold.
\end{example} 

\begin{example}
Quotients of compact LCK surfaces.

Let $(M,\Omega)$ be a compact LCK (non-\K ) manifold which has a finite cyclic group $G=\langle F\rangle\subset\Aut(M)$ for which the metric $\Omega$ is invariant, and such that the fixed point locus of $F$ is a finite set. For every point $x\in M$, there exists a neighborhood $U\ni x$ in $M$ such that on $U$, $\Omega=e^{-f} i \partial\overline{\partial}\phi$, where $f$ is smooth and $\phi$ is strictly plurisubharmonic. 
If the metric on $U$ can also be written $\Omega=e^{-g}i\partial\overline{\partial}\psi$, then $e^{-(f-g)}i\partial\overline{\partial}\phi=\partial\overline{\partial}\psi$ leads to $d(f-g)=0$, hence $f=g+C$, which further implies $e^{-f} i \partial\overline{\partial}\phi=e^{-f} i \partial\overline{\partial}(e^C\psi)$. Therefore, we may assume that $f$ is $G$-invariant.
Moreover, by taking the pull-back of the metric $\Omega$ by all the elements of $G$ and then taking the average metric, we may assume that $\varphi$ is also $G$-invariant, hence both $f$ and $\varphi$ descend to functions on the singular space $X=M/G$. 

Consequently, the LCK metric $\Omega$ descends to $\omega=(V_j, \varphi_j, h_j)_{1\leq j\leq r}$, which is an LCK metric on $X_{\reg}$. However, the functions $\varphi_j$ and $h_j$ may not be smooth at the singular points of $X$.
We may assume that the projection on $X$ of every fixed point of $F$ has a small neighborhood in $X$ which intersects only one of the sets $V_j$. We can modify both $\varphi_j$ and $h_j$ on this small neighborhood, to make them smooth, with the modified $\varphi_j$ still strictly plurisubharmonic, thus obtaining a modified metric on $X$ which is still LCK. 

Now, if we assume that $X$ has a \K\ metric, then its pull-back to $M$ is a (1,1)-form on $M$ which is \K\ on the complement of the set of fixed points of $F$, and it can be modified at those points to obtain a \K\ metric on the whole $M$, yielding a contradiction. Hence, the singular complex space $X$ does not admit \K\ metrics.
\end{example}

\section{Remarks}

Mumford \cite{MUM} proved that if $P\in V$ is a normal point of a 2-dimensional algebraic variety, then $V\setminus\{P\}$ is locally simply connected around $P$ if and only if $P$ is a regular point of $V$. Thus, the regular locus of a simply connected normal complex space is not, in general, simply connected.
For this reason, the proof of our theorem cannot be modified to use normalizations instead of desingularizations, which would have been better, since the method would have worked for any complex space. 
But it is worth remarking that even if for our proof the additional assumption on the type of singularities is essential, the theorem might be true for the general case of singular complex spaces. Thus, we propose the following problem:

\begin{problem}
Prove that a complex space $X$ admits an LCK metric if and only if its universal cover $\widetilde{X}$ admits a \K\ metric such that deck automorphisms act on $\widetilde{X}$ by homotheties of the \K\ metric, or find a counterexample to this statement.
\end{problem}

\vspace{0.4cm}
We also want to point out that the Hopf surface (2-dimensional) and the Inoue surface do not have ``enough" automorphisms to be used for examples like \ref{EX1}, and the quotient of any of these surfaces by a finite group of automorphisms is again smooth and in the same class as the initial surface. Also, since all our concrete examples are quotients of LCK, non-K\" ahler manifolds, it would be interesting to find a different way to construct an example of LCK complex space which does not admit \K\ metrics.

\subsection*{Acknowledgment}
Ovidiu Preda is grateful to Professor Liviu Ornea for guiding his learning of geometry which led to the problem studied in this article. Both authors are thankful to Alexandra Otiman for repeatedly taking the time to answer their questions.



\begin{thebibliography}{100}

\bibitem{BI-MI} Bierstone E.; Milman P.D.: \textit{Canonical desingularization in characteristic zero by blowing up the maximum strata of a local invariant}, Invent. Math. \textbf{128} no. 2, 207--302, (1997).

\bibitem{CAR} Cartan H.: \textit{Quotient d'une vari\' et\' e analytique par un groupe discret d'automor-phismes}, S\' eminaire Henri Cartan, tome 6 (1953--1954), p. 1--13.

\bibitem{GRA} Grauert H.: \textit{\" Uber Modifikationen und exzeptionelle analytische Mengen}, Math. Ann. \textbf{146}, 331--368, (1962).

\bibitem{KOL} Koll\' ar J.: \textit{Shafarevich maps and plurigenera of algebraic varieties}, Invent. Math. \textbf{113}, 177--215, (1993).

\bibitem{MOI} Moishezon B.G.: \textit{Singular K\" ahlerian spaces}, Proceedings of the International Conference on Manifolds and Related Topics in Topology (Tokyo 1973), Tokyo, 343--351, (1975).

\bibitem{MUM} Mumford D.: \textit{The topology of normal singularities of an algebraic surface and a criterion for simplicity}, Publications math\' ematiques de l’I.H.\' E.S., tome 9, 5--22, (1961).

\bibitem{OV} Ornea L.; Verbitsky M.: \textit{A report on locally conformally \K\  manifolds}. Harmonic Maps and Differential Geometry 135--149, Contemporary Mathematics, Volume: \textbf{542}, Amer. Math. Soc., Providence (2011).

\bibitem{TRI} Tricerri F.: \textit{Some examples of locally conformal K\" ahler manifolds}, Rend. Sem. Mat. Torino \textbf{40}, no.1, 81--92, (1982).

\bibitem{VER} Verbitsky M.: \textit{Holomorphic symplectic geometry and orbifold singularities}, Asian J. Math. \textbf{4}, no.3, 553--564, (2000).

\bibitem{VUL} Vuletescu V.: \textit{Blowing up points on l.c.K. manifolds},  Bull. Math. Soc. Sci. Math. Roumanie (N.S.) \textbf{52} (100), no. 3, 387--390, (2009).



\end{thebibliography}
\end{document}